\newcommand{\rhobar}{\overline{\rho}}
\newcommand{\perf}{\mathrm{perf}}
\newcommand{\sing}{{\mathrm{sing}}}
\newcommand{\Gmax}{{G_{\mathrm{max}}}}
\newcommand{\Gmaxbar}{{\Gbar_{\mathrm{max}}}}
\newcommand{\even}{{\mathrm{even}}}
\newcommand{\odd}{{\mathrm{odd}}}
\newcommand{\mf}{\operatorname{mf}}
\newcommand{\N}{n}
\title{A note on homological mirror symmetry
for singularities of type D}
\author{Masahiro Futaki and Kazushi Ueda}
\date{}
\begin{document}

\maketitle

%
%-------------------- abstract -------------------------
%

\begin{abstract}
We prove homological mirror symmetry
for Lefschetz fibrations
obtained as disconnected sums of polynomials
of types A or D.
The proof is based on the behavior
of the Fukaya category
under the addition of a polynomial of type D.
\end{abstract}

%\tableofcontents

%
%------------------ text starts ---------------
%

\section{Introduction} \label{sc:introduction}

Let $\N$ be a positive integer.
An invertible $\N \times \N$-matrix
$A = (a_{ij})_{i, j = 1}^\N$ with integer components
defines a polynomial $W \in \bC[x_1, \dots, x_\N]$ by
$$
 W = \sum_{i = 1}^\N x_1^{a_{i 1}} \cdots x_\N^{a_{i\N}}.
$$
Note that non-zero coefficients of $W$
can be absorbed by rescaling $x_i$.
A polynomial obtained in this way
is called an {\em invertible polynomial}
if it has an isolated critical point at the origin.
The quotient ring
$R = \bC[x_1, \dots, x_\N] / (W)$ is naturally graded
by the abelian group $L$
generated by $\N + 1$ elements $\vecx_i$ and $\vecc$ with relations
$$
 a_{i 1} \vecx_1 + \dots + a_{i \N} \vecx_\N = \vecc,
  \qquad i = 1, \dots, \N.
$$
The {\em bounded stable derived category} of $R$
introduced by Buchweitz \cite{Buchweitz_MCM}
is the quotient category
$$
 D^b_\sing(R)
 = D^b (\gr R) / D^\perf (\gr R)
$$
of the bounded derived category $D^b(\gr R)$
of finitely-generated $L$-graded $R$-modules
by its full subcategory $D^\perf(\gr R)$
consisting of bounded complexes of projectives	.
This category originates from the theory of
{\em matrix factorizations}
introduced by Eisenbud \cite{Eisenbud_HACI},
and studied by Orlov \cite{Orlov_TCS}
under the name `{\em triangulated category of singularities}'.
This category is not necessarily closed
under direct summands, and
its idempotent completion will be denoted by $D^\pi_\sing(R)$.

The {\em transpose} of the invertible polynomial $W$
is defined by
$$
 W^\ast = \sum_{i = 1}^\N x_1^{a_{1 i}} \cdots x_\N^{a_{\N i}},
$$
which can be perturbed to an exact Lefschetz fibration
with respect to the standard Euclidean K\"{a}hler form on $\bC^n$.
Let $\Fuk W^\ast$ be the directed $A_\infty$-category
defined by Seidel \cite{Seidel_VC, Seidel_PL}
whose set of objects is a distinguished basis of vanishing cycles
%of the Lefschetz fibration
and whose spaces of morphisms are Lagrangian intersection Floer complexes.

The following conjecture comes from the combination of
transposition mirror symmetry
by Berglund and H\"{u}bsch \cite{Berglund-Hubsch}
and homological mirror symmetry
by Kontsevich \cite{Kontsevich_HAMS}:

\begin{conjecture} \label{conj:mirror}
For an invertible polynomial $W$,
there is an equivalence
\begin{equation} %\label{eq:equiv}
 D^b_\sing(R) \cong D^b \Fuk W^\ast
\end{equation}
of triangulated categories.
\end{conjecture}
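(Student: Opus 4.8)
The plan is to reduce the general statement to a short list of \emph{atomic} invertible polynomials together with a Thom--Sebastiani (K\"{u}nneth) principle that must hold compatibly on both sides. By the classification of Kreuzer and Skarke, every invertible polynomial $W$ is, after a permutation of variables, a Thom--Sebastiani sum
$$
 W = W_1 \oplus \cdots \oplus W_k
$$
in disjoint groups of variables, where each $W_j$ is of Fermat type $x^a$, of chain type $x_1^{a_1} x_2 + \cdots + x_{m-1}^{a_{m-1}} x_m + x_m^{a_m}$, or of loop type. Transposition of the exponent matrix commutes with this block decomposition, so $W^\ast = W_1^\ast \oplus \cdots \oplus W_k^\ast$ with each summand again atomic, and the grading group $L$ of $W$ is assembled from those of the $W_j$. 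It therefore suffices to (i) establish the equivalence for each atomic $W_j$, and (ii) prove that both sides convert the operation $\oplus$ into a tensor product of $A_\infty$-categories, compatibly with the atomic equivalences.

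First I would dispose of the base cases. For Fermat $W_j = x^a$ the ring $R_j = \bC[x]/(x^a)$ has $D^b_\sing(R_j)$ equivalent to the directed $A_{a-1}$ category; the mirror $x^a$ is self-transpose, has Milnor number $a-1$, and its distinguished basis of $a-1$ vanishing cycles realises exactly the same directed $A_{a-1}$ category, so the two sides match on the nose. For chain and loop atoms one must compute both categories directly: the matrix factorisation category of such a polynomial admits an explicit generator whose endomorphism $A_\infty$-algebra can be written down, while on the symplectic side the vanishing cycles of the (again chain or loop) transpose and their Floer products must be determined and matched with that algebra. Carrying out these atomic computations is the first substantial piece of work.

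The heart of the argument is the K\"{u}nneth step. On the algebraic side the matrix factorisation category satisfies a graded tensor-product formula,
$$
 \MF_L(W_1 \oplus W_2) \simeq \MF_{L_1}(W_1) \otimes \MF_{L_2}(W_2),
$$
a graded refinement of Kn\"{o}rrer periodicity (in the spirit of Yoshino and Orlov), which transports to the stable derived category after idempotent completion $D^\pi_\sing$. On the symplectic side one needs the Thom--Sebastiani theorem for directed Fukaya categories: since $\Crit(W_1^\ast \oplus W_2^\ast)$ is the product of the critical loci and the vanishing cycles of a sum are geometric products of the vanishing cycles of the summands, the directed category $\Fuk(W_1^\ast \oplus W_2^\ast)$ should be quasi-equivalent to the tensor product $\Fuk W_1^\ast \otimes \Fuk W_2^\ast$. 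Granting both formulas and their compatibility with the atomic equivalences, an induction on the number of blocks $k$ yields the conjecture.

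The main obstacle is the symplectic Thom--Sebastiani statement at the $A_\infty$-level. While the product structure of vanishing cycles under $\oplus$ is classical on (co)homology, promoting it to a quasi-equivalence of directed $A_\infty$-categories requires controlling all higher products, the ordering that defines directedness, and the signs and grading shifts introduced by the suspension of the superpotential. This is precisely the step that resists a uniform treatment for arbitrary atoms: one is naturally led to establish it geometrically for the restricted operation of adding a single type $A$ or type $D$ piece, where the effect on the Fukaya category can be analysed by an explicit handle and vanishing-cycle argument. A proof of the conjecture as stated would require this $A_\infty$ Thom--Sebastiani theorem for chain and loop atoms in full generality.
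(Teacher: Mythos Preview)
The statement is a \emph{conjecture}, and the paper does not prove it; it proves only the special case (Theorem~\ref{th:main}) where $W^\ast$ is a disconnected sum of type~$A$ and type~$D$ polynomials. Your outline is precisely the template the paper follows for that special case: atomic equivalences plus a K\"{u}nneth formula on each side. On the algebraic side the paper does essentially what you sketch (Section~\ref{sc:dbsing_sum}), using the Koszul matrix factorisation of $R/\frakm$ as a compact generator to obtain the tensor-product formula for $\mf_S^L(W)$. On the symplectic side the paper does \emph{not} appeal to a general $A_\infty$ Thom--Sebastiani theorem; instead it works out by hand the effect of adjoining a single $D_n$-summand (Section~\ref{sc:fuk_dn+g}), using Seidel's suspension description of the vanishing cycles and then checking that the higher $A_\infty$-products vanish for degree reasons --- exactly the restricted situation you anticipate in your final paragraph.

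As a proof of the conjecture, however, your proposal has genuine gaps that you yourself name: the atomic chain and loop cases are asserted rather than carried out, and the symplectic Thom--Sebastiani statement at the $A_\infty$ level is posed as a desideratum rather than established. Neither ingredient is supplied in the paper, and the general conjecture remains open there. What you have written is an accurate roadmap that correctly extrapolates the paper's method, but it is not a proof.
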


Conjecture \ref{conj:mirror} is known to hold
for Brieskorn-Pham singularities
\cite{Futaki-Ueda_Fermat}.
Takahashi and Ebeling
\cite{Takahashi_WPL, Ebeling-Takahashi_SDWHP}
studies Conjecture \ref{conj:mirror}
from the point of view of
the duality of regular systems of weights
by Saito \cite{Saito_DRSW}.

Recall that the polynomials of types $A_n$ and $D_n$ are defined by
$$
 f = x^{n+1} %\in \bC[x]
$$
and
$$
 f = x^{n-1} + x y^2 %\in \bC[x, y]
$$
respectively.
We prove the following in this paper:

\begin{theorem} \label{th:main}
One has an equivalence
\begin{equation} \label{eq:pi_equiv}
 D^\pi_\sing(R) \cong D^b \Fuk W^\ast
\end{equation}
of triangulated categories
if $W^\ast$ is a disconnected sum of polynomials
of types $A$ or $D$.
\end{theorem}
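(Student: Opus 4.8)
The plan is to prove the equivalence \eqref{eq:pi_equiv} by induction on the number of indecomposable summands of $W^\ast$, with the inductive step governed by a K\"unneth-type product formula on each side. Write $W^\ast = U \boxplus P$, where $P$ is a single summand of type $A$ or $D$ and $U$ is the disconnected sum of the remaining summands (again of types $A$ or $D$); since transposition commutes with disconnected sums, $W = U^\ast \boxplus P^\ast$ and $R = \bC[\vec x]/(W)$ splits the variables accordingly. The base cases are a single polynomial of type $A$, for which \eqref{eq:pi_equiv} is the one-variable Brieskorn--Pham instance already contained in \cite{Futaki-Ueda_Fermat}, and a single polynomial of type $D$, which I would handle by computing both sides explicitly: perturb $D_n = x^{n-1} + xy^2$ to a concrete Lefschetz fibration, write down a distinguished basis of $n$ vanishing cycles and identify the resulting directed $A_\infty$-category --- in particular showing it is formal and isomorphic to the path algebra of an explicit quiver with relations --- and on the other side compute $D^\pi_\sing$ of the transpose directly (a two-variable graded hypersurface singularity, e.g.\ via Orlov's comparison with the associated projective hypersurface), matching it with the derived category of the same algebra.

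For the inductive step assume \eqref{eq:pi_equiv} holds with $U$ in place of $W^\ast$. On the algebraic side, the Thom--Sebastiani theorem for graded matrix factorizations yields a quasi-equivalence of dg enhancements inducing
\begin{equation}
 D^\pi_\sing(R) \cong \bigl( D^\pi_\sing(\bC[\vec x]/(U^\ast)) \otimes D^\pi_\sing(\bC[\vec y]/(P^\ast)) \bigr)^\pi ,
\end{equation}
the tensor product being formed on enhancements and then closed up under cones and direct summands; the one point needing care is the grading group, which for a disconnected sum is the product of the grading groups of the two pieces with their canonical classes $\vecc$ identified. This is the easy half.

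The heart of the argument, and the phenomenon highlighted in the abstract, is the analogous statement on the symplectic side: after passing to twisted complexes,
\begin{equation}
 \Fuk(W^\ast) = \Fuk(U \boxplus P) \simeq \Fuk(U) \otimes \Fuk(P) .
\end{equation}
The geometric input is that a Lefschetz fibration modelling $U \boxplus P$ can be realised as a product of Lefschetz fibrations modelling $U$ and $P$, so that its critical values are the pairwise sums of those of the factors and a distinguished basis of vanishing cycles is produced from distinguished bases of the factors by a matching-path construction. When $P$ is of type $D$ one must carry this through for an explicit $D_n$-fibration: check that the matching cycles so obtained really form a \emph{distinguished} basis --- correct ordering, and vanishing of the wrong-way morphism groups --- and evaluate the $A_\infty$-products among them via the expected holomorphic polygons, with the signs and the $\bZ$-grading dictated by the weighted-homogeneous structure, so as to identify the outcome with the tensor-product $A_\infty$-category on the nose. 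Granting \eqref{eq:pi_equiv} for $U$, the base-case identification for $P$, and the functoriality of the tensor product under all these equivalences, the two product formulas then combine to give \eqref{eq:pi_equiv} for $W^\ast$, closing the induction.

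The step I expect to be the main obstacle is exactly this $A$-side product formula for a type-$D$ summand: pinning down the $A_\infty$-structure on the matching cycles requires a fairly explicit and delicate analysis of the vanishing cycles of a $D_n$ Lefschetz fibration and of the relevant moduli of holomorphic discs, and one must take care that the chosen collection is genuinely distinguished rather than merely a basis of vanishing cycles. Once the factor categories are known to be formal, by contrast, the final comparison collapses to matching finite-dimensional graded algebras together with their relations, which is routine; and the Thom--Sebastiani statement on the $B$-side, while it does require attention to the grading group, presents no essential difficulty.
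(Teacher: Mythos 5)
Your proposal follows essentially the same route as the paper: induction on the number of summands, with the type $A$ base case quoted from the Brieskorn--Pham paper, the type $D$ base case computed explicitly on both sides (vanishing cycles of a perturbed $x^{n-1}+xy^2$ versus the graded singularity category of $x^{n-1}y+y^2$ via Orlov's semiorthogonal decomposition, both landing on the $D_n$ quiver), a graded Thom--Sebastiani/tensor-product formula for matrix factorizations on the $B$-side accounting for the idempotent completion, and a matching-path analysis of the Fukaya category of $f+g$ on the $A$-side. The only cosmetic difference is that the paper organizes the $A_\infty$-computation for the matching cycles through Seidel's algebraic suspension of the pair $(\scB,\scA)$ plus a degree argument for the vanishing of higher products, rather than a direct count of holomorphic polygons, but this is the same step you correctly identify as the crux.
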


The proof is
%a straight generalization of \cite{Futaki-Ueda_Fermat}, and
based on the study of the behavior
of categories on both sides of \eqref{eq:pi_equiv}
under the addition of a polynomial of type $D$.

The organization of this paper is as follows:
In Section \ref{sc:fuk_dn},
we compute the Fukaya category of a Lefschetz fibration
defined by a polynomial of type $D$.
In Section \ref{sc:fuk_dn+g},
we use induction to compute the Fukaya category
of a disconnected sum of type $A$ and type $D$ polynomials.
The bounded stable derived category of
the transpose of a type $D$ singularity is computed in Section \ref{sc:dbsing},
and the behavior of stable derived categories
under disconnected summation of polynomials is studied
in Section \ref{sc:dbsing_sum}.
In Section \ref{sc:group_actions},
we discuss a possible generalization of Conjecture \ref{conj:mirror}
to the case with group actions when $n = 2$.

\section{The Fukaya category of $x^{n-1} + x y^2$}
 \label{sc:fuk_dn}

Let
$$
 f(x, y) = x^{n-1} + x y^2 + y
$$
be a perturbation of a polynomial
%$
% x^{n-1} + x y^2
%$
of type $D_n$.
The critical points of $f$ are given by
%\begin{align*}
%\left\{
%\begin{aligned}
% (n-1) x^{n-2} + y^2 &= 0, \\
% 2 x y + 1 &= 0,
%\end{aligned}
%\right.
%\end{align*}
%so that
%$$
% y = - \frac{1}{2 x}
%$$
%and
%\begin{align*}
% (n-1) x^{n-2} + \frac{1}{4 x^2} &= 0, \\
% x^n = - \frac{1}{4 (n - 1)},
%\end{align*}
\begin{align*}
\left\{
\begin{aligned}
 x^n &= - \frac{1}{4 (n - 1)}, \\
 y &= - \frac{1}{2 x},
\end{aligned}
\right.
\end{align*}
with critical values
%\begin{align*}
% x^{n-1} + x \cdot \frac{1}{4 x^2} - \frac{1}{2 x}
%  &= x^{n-1} - \frac{1}{4 x} \\
%  &= \frac{1}{4 x}(4 x^n - 1) \\
%  &= \frac{1}{4 x} \lb - \frac{1}{n-1} - 1 \rb \\
%  &= - \frac{n}{n-1} \cdot \frac{1}{4 x}.
%\end{align*}
$$
 f(x, y) = - \frac{n}{n-1} \cdot \frac{1}{4 x}.
$$
Consider the diagram
\vspace{7mm}
$$
\begin{psmatrix}[colsep=1.5]
 \bC^{2} & \bC^2 & \bC
\end{psmatrix}
\psset{shortput=nab,arrows=->,labelsep=3pt}
\small
\ncline{1,1}{1,2}_{\varpi}
\ncline{1,2}{1,3}_{\psi}
\ncarc[arcangle=30]{1,1}{1,3}^{\Psi = \psi \circ \varpi}
$$
where
$$
 \varpi(x, y) = (f(x, y), x)
$$
and
$$
 \psi(y_1, y_2) = y_1.
$$
%Since $\scE_t$ is defined by
%$$
% x^{n-1} + x y^2 + y = t,
%$$
%that is,
%\begin{equation} \label{eq:varpifib}
% x y^2 + y + (x^{n-1} - t) = 0,
%\end{equation}
%the branch points of $\varpi_t$ are given by
%$$
% 1 - 4 x (x^{n-1} - t) = 0,
%$$
%that is,
%$$
% 4 x^n - 4 t x - 1 = 0.
%$$
For general $t \in \bC$, the map
$$
 \scE_t \xto{\varpi_t} \scS_t
$$
from $\scE_t = \Psi^{-1}(t)$ to $\scS_t = \psi^{-1}(t)$
is a double cover branching at
$$
 \{ x \in \bC \mid 4 x^n - 4 t x - 1 = 0 \}.
$$
Besides these branch points,
the origin is a distinguished point
with respect to the projection $\varpi_t$,
since one of two points in the fiber $\varpi_t^{-1}(x)$
goes to infinity at $x = 0$.

Now choose a distinguished set of vanishing paths
as the straight line segments from the origin to critical values
as shown in Figure \ref{fg:vp_f_1}.
The trajectories of the branch points of $\varpi_t$
along these paths are shown in Figure \ref{fg:vc_f_1},
which are the images of the vanishing cycles by $\varpi_0$.
By the mutation of the distinguished set of vanishing paths
as in Figure \ref{fg:vp_f_2},
one obtains the vanishing cycles
shown in Figure \ref{fg:vc_f_2}.
By continuing mutations,
one arrives at the distinguished set of vanishing paths
shown in Figure \ref{fg:vp_f_3}.
The images of the corresponding vanishing cycles by $\varpi_0$
are shown in Figure \ref{fg:vc_f_3}.

\begin{figure}
\begin{minipage}{.4 \linewidth}
\centering
\input{vp_f_1.pst}
\caption{A distinguished set of vanishing paths}
\label{fg:vp_f_1}
\end{minipage}
\begin{minipage}{.1 \linewidth}
\ \\
\end{minipage}
\begin{minipage}{.4 \linewidth}
\centering
\input{vc_f_1.pst}
\caption{The image of the vanishing cycles}
\label{fg:vc_f_1}
\end{minipage}
%\end{figure}
%
%\begin{figure}
\begin{minipage}{.5 \linewidth}
\centering
\input{vp_f_2.pst}
\caption{Vanishing paths after a mutation}
\label{fg:vp_f_2}
\end{minipage}
\begin{minipage}{.5 \linewidth}
\centering
\input{vc_f_2.pst}
\caption{Corresponding vanishing cycles}
\label{fg:vc_f_2}
\end{minipage}
\end{figure}

\begin{figure}
\begin{minipage}{.5 \linewidth}
\centering
\input{vp_f_3.pst}
\caption{Vanishing paths
after a sequence of mutations}
\label{fg:vp_f_3}
\end{minipage}
\begin{minipage}{.5 \linewidth}
\centering
\input{vc_f_3.pst}
\caption{Corresponding vanishing cycles}
\label{fg:vc_f_3}
\end{minipage}
\end{figure}

Note that one can perturb $C_1$ by a Hamiltonian diffeomorphism
$
 \psi: f^{-1}(0) \to f^{-1}(0)
$
so that $\psi(C_1)$ does not intersect with $C_2$.
%$\psi(C_1) \cap C_2 = \emptyset$.
Now it is easy to see that
there is a quasi-equivalence
$$
 D^b \Fuk f \simto D^b \module \Gamma
$$
from the derived Fukaya category of $f$
to the derived category of finite-dimensional modules
over the path algebra of the quiver
\begin{equation} \label{eq:Dynkin_quiver}
\Gamma =
\left(
\begin{array}{c@{\hskip13mm}c@{\hskip13mm}
c@{\hskip13mm}c@{\hskip13mm}c}
 \Rnode{01}{v_1} & & & \\[5mm]
 &
 \Rnode{12}{v_3} &
 \Rnode{13}{v_4} &
 \Rnode{14}{\cdots} &
 \Rnode{15}{v_n} \\[5mm]
 \Rnode{21}{v_2} & & & &
\end{array}
\psset{nodesep=3pt}
%\everypsbox{\scriptstyle}
\ncline{<-}{01}{12}
\ncline{<-}{21}{12}
\ncline{<-}{12}{13}
\ncline{<-}{13}{14}
\ncline{<-}{14}{15}
\right)
\end{equation}
such that the vanishing cycle $C_i$ is mapped
to the simple module $S_i$ associated with the $i$-th vertex $v_i$
for $i = 1, \dots, n$.

\section{The Fukaya category of $x^{n-1} + x y^2 + g(z)$}
 \label{sc:fuk_dn+g}

Let
$$
 f(x, y) = x^{n-1} + x y^2 + y
$$
be a perturbation of a polynomial
%$x^{n-1} + x y^2$
of type $D_n$
%in Section \ref{sc:fuk_dn}
and
$$
 g : \bC^k \to \bC
$$
be an exact symplectic Lefschetz fibration.
%where $0 < \epsilon < 1$.
Consider the diagram
\vspace{7mm}
$$
\begin{psmatrix}[colsep=1.5]
 \bC^{k+2} & \bC^2 & \bC
\end{psmatrix}
\psset{shortput=nab,arrows=->,labelsep=3pt}
\small
\ncline{1,1}{1,2}_{\varpi}
\ncline{1,2}{1,3}_{\psi}
\ncarc[arcangle=30]{1,1}{1,3}^{\Psi = \psi \circ \varpi}
$$
where
$$
 \varpi(x, y, z) = (f(x, y) + g(z), x)
$$
and
$$
 \psi(y_1, y_2) = y_1.
$$
We write the critical points of $f$ and $g$ as
$$
 \Crit f = \{ (x_i, y_i) \}_{i=1}^n
$$
and
$$
 \Crit g
  = \{ z_j \}_{j=1}^{m},
$$
so that the set of critical points of $\Psi$ is given by
$$
 \Crit \Psi
  = \Crit f \times \Crit g
  = \{ p_{ij} = (x_i, y_i, z_j) \}_{i, j}
$$
with critical values 
$$
 \Psi(p_{ij}) = f(x_i, y_i) + g(z_j).
$$
Assume for simplicity that
the set of critical values of $g$
is the set of $m$-th roots of unity.
Figure \ref{fg:critv_fg} shows the critical values of $f$ and $g$
in the case $n = 4$ and $m = 3$, and
Figure \ref{fg:critv_Psi} shows
the corresponding critical values of $\Psi$.

\begin{figure}
\begin{minipage}{.5 \linewidth}
\centering
\input{critv_fg.pst}
\caption{Critical values of $f$ (outside) and $g$ (inside)}
\label{fg:critv_fg}
\end{minipage}
\begin{minipage}{.5 \linewidth}
\centering
\input{critv_Psi.pst}
\caption{Critical values of $\Psi$}
\label{fg:critv_Psi}
\end{minipage}
\end{figure}

\begin{figure}
\begin{minipage}{.5 \linewidth}
\centering
\input{Psi_vp.pst}
\caption{A distinguished set $(\gamma_{ij})_{ij}$
of vanishing paths for $\Psi$}
\label{fg:Psi_vp}
\end{minipage}
\begin{minipage}{.5 \linewidth}
\centering
\input{s-plane_mp.pst}
\caption{Trajectories of $t - \Crit(g)$
along the vanishing paths $\gamma_{ij}$}
\label{fg:s-plane_mp}
\end{minipage}
%\end{figure}
%
%\begin{figure}
\begin{minipage}{.5 \linewidth}
\centering
\scalebox{.95}{\input{u-plane_mp1.pst}}
\caption{Matching paths on the $x$-plane}
\label{fg:u-plane_mp1}
\end{minipage}
\begin{minipage}{.5 \linewidth}
\centering
\input{u-plane_mp.pst}
\caption{Matching paths after distortion}
\label{fg:u-plane_mp}
\end{minipage}
\end{figure}

\begin{figure}
\begin{minipage}{.5 \linewidth}
\centering
\input{u-plane_vp1.pst}
\caption{A distinguished set $(\delta_{ij})_{ij}$
of vanishing paths for $\varpi_0$}
\label{fg:u-plane_vp1}
\end{minipage}
\begin{minipage}{.5 \linewidth}
\centering
\input{u-plane_vp2.pst}
\caption{Another distinguished set $(\delta'_{ij})_{ij}$
of vanishing paths for $\varpi_0$}
\label{fg:u-plane_vp2}
\end{minipage}
\end{figure}

For general $t \in \bC$,
the map
$$
 \scE_t \xto{\varpi_t} \scS_t
$$
from $\scE_t = \Psi^{-1}(t)$ to $\scS_t = \psi^{-1}(t)$
is a Lefschetz fibration away from the origin,
and a point $(t, x) \in \scS_t$ is a critical value of $\varpi_t$
if there is a solution to the equations
\begin{align*}
\left\{
\begin{aligned}
 \frac{\partial f}{\partial x}(x, y)
  &= \frac{\partial f}{\partial y}(x, y)
  = \frac{\partial g}{\partial z}(z)
  = 0, \\
 f(x, y, z) &= t.
\end{aligned}
\right.
\end{align*}
This condition is equivalent to
\begin{align*}
\left\{
\begin{aligned}
 & 4 x^n - 4 s x - 1 = 0, \text{ and}\\
 & s = t - g(z_j) \text{ for some } j \in \{ 1, \dots, m \}.
\end{aligned}
\right.
\end{align*}
If we write
$$
 D(s) = \{ x \in \bC \mid 4 x^n - 4 s x - 1 = 0 \},
$$
then the trajectory of $D(t - \Crit(g))$
as $t$ varies along a vanishing path
is a matching path corresponding to a vanishing cycle of $\Psi$.
We choose a distinguished set $(\gamma_{ij})_{ij}$ of vanishing paths
as in Figure \ref{fg:Psi_vp}.
The trajectories of $t - \Crit(g)$ are shown
in Figure \ref{fg:s-plane_mp}, and
the matching paths $\mu_{ij}$ corresponding to vanishing cycles $C_{ij}$
are shown in Figure \ref{fg:u-plane_mp1}.
Figure \ref{fg:u-plane_mp} is obtained from Figure \ref{fg:u-plane_mp1}
by distorting for better legibility.

Let $C_j \subset g^{-1}(0)$ be
the vanishing cycle of $g$ along the straight line segment
from the origin to $g(z_j)$, and
choose a base point $\ast$ on the $x$-plane and
a distinguished set $(\delta_{ij})$ of vanishing paths for $\varpi_0$
as in Figure \ref{fg:u-plane_vp1}.
Since $f(x, y)$ is quadratic in the variable $y$,
the fiber $\varpi_0^{-1}(\ast)$ is a suspension of $g^{-1}(0)$,
and the vanishing cycle $\Delta_{i,j}$ of $\varpi_0$ along $\delta_{ij}$
is a suspension of $C_j$.
If write the Fukaya category of $g^{-1}(0)$ consisting of $\{ C_j \}_{j=1}^m$
and its directed subcategory
as $\scB$ and $\scA$ respectively,
then the Fukaya category $\scB^\sigma_n$ of $\varpi_0^{-1}(\ast)$
consisting of $\Delta_{i,j}$ satisfies
$$
 \hom_{\scB^\sigma_n}(\Delta_{i,j}, \Delta_{i',j'})
  = \hom_{\scB^\sigma}(C_j^\sigma, C_{j'}^\sigma),
$$
where $(\scB^\sigma, \scA^\sigma)$ is the algebraic suspension
of the pair $(\scB, \scA)$
defined by Seidel \cite{Seidel_suspension}.
Let
$$
 S_{ij} = \Cone(\Delta_{i+1,j} \to \Delta_{i,j}), \qquad
  i = 1, \dots, n, \quad
  j = 1, \dots, m
$$
be the object of $D^b(\scA^\sigma_n)$
defined as the cone over the morphism
$e_{i, j} : \Delta_{i+1,j} \to \Delta_{i,j}$
corresponding to $\id_{C_j^\sigma}$ under the isomorphism
$$
 \hom_{\scA^\sigma_n} (\Delta_{i+1,j}, \Delta_{i,j})
  = \hom_{\scB^\sigma}(C_j^\sigma, C_{j}^\sigma).
$$
The following theorem gives a description
of the $A_\infty$-structure on $C_{ij}$ for $i \ne 2$
in terms of the Fukaya category of $g^{-1}(0)$:

\begin{theorem}
[{Seidel
%Proposition 18.13 and
\cite[Proposition 18.21]{Seidel_PL}}]
 \label{th:Seidel}
There is a cohomologically full and faithful functor
from the Fukaya category of $\Psi^{-1}(0)$
consisting of vanishing cycles $C_{ij}$
for $i \ne 2$ to $D^b \scA^\sigma_n$,
such that $C_{1j}$ are mapped to $S_{1j}$ and
$C_{ij}$ for $i \ge 3$ are mapped to $S_{i-1,j}$.
\end{theorem}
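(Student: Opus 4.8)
The statement is quoted from \cite[Proposition 18.21]{Seidel_PL}, so the plan is not to reprove it from scratch but to exhibit our geometric situation as an instance of the setup there and to read off the images of the objects. I would proceed in three steps.

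First I would set up the exact Lefschetz fibration to which Seidel's result is applied. Restricting $\varpi_0 \colon \Psi^{-1}(0) \to \bC$ to a large disc in the $x$-plane that avoids the distinguished point at the origin, one obtains an exact Lefschetz fibration whose critical values are the points of $D(-g(z_j))$ for $j = 1, \dots, m$ and whose fibre over the chosen base point $\ast$ is $\varpi_0^{-1}(\ast)$, a suspension of $g^{-1}(0)$ since $f$ is quadratic in $y$. Along the distinguished set $(\delta_{ij})$ of Figure~\ref{fg:u-plane_vp1} its vanishing cycles are the suspensions $\Delta_{i,j}$ of the $C_j$, and the associated directed $A_\infty$-category is $\scA^\sigma_n$, with morphism spaces given by the algebraic suspension of $(\scB, \scA)$ recalled above. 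This is exactly the input --- a Lefschetz fibration together with the directed Fukaya category of its fibre --- required by \cite[Proposition 18.21]{Seidel_PL}.

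Next I would identify the vanishing cycles $C_{ij}$ of $\Psi$ with $i \neq 2$ as matching cycles for $\varpi_0$. By the construction preceding the theorem, $C_{ij}$ is the matching cycle over the matching path $\mu_{ij}$ traced out by the discriminant $D(t - g(z_j))$ as $t$ moves along the vanishing path $\gamma_{ij}$ of Figure~\ref{fg:Psi_vp}; the resulting matching paths appear, after distortion for legibility, in Figure~\ref{fg:u-plane_mp}. The key observation is that for $i \neq 2$ the path $\mu_{ij}$ is isotopic, rel endpoints and within the complement of the origin, to the concatenation of two of the vanishing paths chosen for $\varpi_0$: for $i = 1$ it joins the critical values carrying $\Delta_{1,j}$ and $\Delta_{2,j}$, and for $i \geq 3$ those carrying $\Delta_{i-1,j}$ and $\Delta_{i,j}$. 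The cycle $C_{2j}$ is omitted precisely because its matching path is forced to pass through the origin, where $\varpi_0$ fails to be a genuine Lefschetz fibration. Then \cite[Proposition 18.21]{Seidel_PL} yields a cohomologically full and faithful functor to $D^b \scA^\sigma_n$ sending each such matching cycle to the cone of the morphism between the two vanishing cycles it joins; since over the base point both local branches of $\mu_{ij}$ recover the same vanishing cycle $C_j$ of $g$, that morphism is the distinguished one $e_{\cdot,j}$ corresponding to $\id_{C_j^\sigma}$, and one concludes $C_{1j} \mapsto \Cone(\Delta_{2,j} \to \Delta_{1,j}) = S_{1j}$ and $C_{ij} \mapsto \Cone(\Delta_{i,j} \to \Delta_{i-1,j}) = S_{i-1,j}$ for $i \geq 3$.

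The main obstacle will be the planar bookkeeping in the second step: one has to follow the motion of the $n$-point set $D(s)$ through the successive mutations and distortions of Figures~\ref{fg:s-plane_mp}--\ref{fg:u-plane_vp2} with enough care to pin down exactly which pair of critical values of $\varpi_0$ each $\mu_{ij}$ with $i \neq 2$ joins, and to verify that $\mu_{2j}$ unavoidably meets the origin. Everything else --- full faithfulness at the cohomological level and compatibility with the $A_\infty$-structure --- is delivered directly by \cite[Proposition 18.21]{Seidel_PL} once the matching-cycle description is in place.
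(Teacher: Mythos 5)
Your proposal is correct and follows essentially the same route as the paper, which offers no independent proof of this statement but, exactly as you describe, sets up $\varpi_0$ as a Lefschetz fibration away from the distinguished origin, realizes the $C_{ij}$ with $i \ne 2$ as matching cycles over the paths $\mu_{ij}$ of Figures \ref{fg:u-plane_mp1}--\ref{fg:u-plane_mp}, and then invokes \cite[Proposition 18.21]{Seidel_PL} to send each matching cycle to the cone $S_{ij}$ on the distinguished morphism between the corresponding suspended vanishing cycles $\Delta_{i,j}$. The only caveat is one of emphasis: the exclusion of $i = 2$ is not so much that $\mu_{2j}$ literally passes through the origin as that it cannot be isotoped onto a concatenation of the chosen paths $\delta_{ij}$ without crossing it, which is why the paper handles those cycles separately with the second system $(\delta'_{ij})$ of Figure \ref{fg:u-plane_vp2}.
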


Note that this description is completely parallel to
the case of type $A$ singularities
given in \cite{Futaki-Ueda_Fermat}.
One can repeat the same argument
using the distinguished set of vanishing paths
in Figure \ref{fg:u-plane_vp2}
to give an identical description
of the $A_\infty$-structure on $C_{ij}$ for $i \ne 1$.
Moreover,
since the vanishing cycles $C_{1j}$ and $C_{2j}$ can be obtained
as iterated suspensions of the vanishing cycles $C_1$ and $C_2$
in Section \ref{sc:fuk_dn},
one can choose a Hamiltonian diffeomorphism
$\psi : \Psi^{-1}(0) \to \Psi^{-1}(0)$
satisfying $\psi(C_{1j}) \cap C_{2j'} = \emptyset$
for any $j$ and $j'$.
This shows that there are no morphisms
from $C_{1j}$ to $C_{2j'}$
in the cohomology category of the Fukaya category of $\Psi$.

Now one can follow the same argument
as \cite[Section 3]{Futaki-Ueda_Fermat}
to show that the Fukaya category of a disconnected sum
of polynomials of types $A$ and $D$
is quasi-equivalent to the tensor product of the graded categories
associated with individual polynomials;
it is straightforward to check this at the level of cohomology categories,
and higher $A_\infty$-operations vanish for degree reasons.

\section{The stable derived category of $x^{n-1} y + y^2$}
 \label{sc:dbsing}

Let
$$
 W = x^{n - 1} y + y^2.
$$
be the transpose of a polynomial of type $D_n$.
The abelian group
$$
 L = \bZ \vecx \oplus \bZ \vecy \oplus \bZ \vecc
  / ((n-1) \vecx + \vecy - \vecc, 2 \vecy - \vecc)
$$
is isomorphic to $\bZ$, so that the coordinate ring
$$
 R = \bC[x, y] / (W)
$$
is graded as
$$
 \deg(x, y) = (1, n-1).
$$
First recall the following:

\begin{lemma}[{Orlov \cite[Lemma 2.4]{Orlov_DCCSTCS}}]
There is a weak semiorthogonal decomposition
$$
 D^b (\gr R_{\ge 0}) = \langle \scP_{\ge 0}, \scT_0 \rangle
$$
where $\gr R_{\ge 0}$ is the abelian category of
$\bN$-graded $R$-modules,
$\scP_{\ge 0}$ is the subcategory of $D^b( \gr R_{\ge 0})$
generated by $R(i)$ for $i \le 0$, and
$\scT_0$ is equivalent
to $D^b_\sing(R)$.
\end{lemma}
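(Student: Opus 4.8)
This is Orlov's lemma \cite[Lemma 2.4]{Orlov_DCCSTCS}, a graded refinement of Buchweitz's theorem. The plan is to identify $\scP_{\ge 0}$ with the perfect complexes in $D^b(\gr R_{\ge 0})$, take $\scT_0$ to be its left orthogonal ${}^{\perp}\scP_{\ge 0}$, produce the decomposition triangles by truncating projective resolutions, and match $\scT_0$ with $D^b_\sing(R)$ through the tautological functor.

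First I would verify that $\gr R_{\ge 0}$ has enough projectives: a finitely generated $\bN$-graded module is generated in non-negative degrees, hence is a quotient of a finite sum of twists $R(i)$ with $i \le 0$, and each such $R(i)$ is projective in $\gr R_{\ge 0}$ because the functor of taking the degree-$(-i)$ component is exact. Since submodules of $\bN$-graded modules are again $\bN$-graded, syzygies stay in $\gr R_{\ge 0}$, so minimal graded projective resolutions computed there agree with those computed in $\gr R$; and because $R_0 = \bC$, graded Nakayama shows that the $R(i)$, $i \le 0$, are all the indecomposable projectives. Hence $\scP_{\ge 0}$ coincides with the subcategory $D^\perf(\gr R_{\ge 0})$ of perfect complexes, and I would set $\scT_0 := {}^{\perp}\scP_{\ge 0} = \{\, x : \Ext^\bullet_{\gr R}(x, R(i)) = 0 \ \text{for all}\ i \le 0 \,\}$, for which the semiorthogonality $\Hom^\bullet(\scT_0, \scP_{\ge 0}) = 0$ is immediate.

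Next I would produce, for every $x \in D^b(\gr R_{\ge 0})$, a triangle $G \to x \to P \to G[1]$ exhibiting the decomposition. Here $P$ is obtained by brutally truncating a minimal graded projective resolution of $x$ far enough to leave a bounded complex of projectives, so $P \in \scP_{\ge 0}$, and $G$ is the complementary truncation, quasi-isomorphic to a shifted syzygy; since $R$ is a one-dimensional hypersurface, hence Gorenstein, every first syzygy is maximal Cohen--Macaulay, so $G$ is a shifted MCM module. The point that requires care is that $G$ can be chosen inside $\scT_0$: exploiting the freedom to replace $G$ by an internal twist — via the $2$-periodicity $\Omega^2 M \cong M(-\deg W)$ of matrix factorization resolutions, which trades internal twists for homological shifts, together with the value $a(R) = n-2$ of the $a$-invariant of $R$ to control where the dual $\Hom_R(G,R)$ is supported — one arranges $\Ext^\bullet_{\gr R}(G, R(i)) = 0$ for all $i \le 0$. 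Granting this, $D^b(\gr R_{\ge 0}) = \langle \scP_{\ge 0}, \scT_0 \rangle$.

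Finally I would identify $\scT_0 \simeq D^b_\sing(R)$. The composite $D^b(\gr R_{\ge 0}) \hookrightarrow D^b(\gr R) \to D^b_\sing(R)$ annihilates $\scP_{\ge 0}$, hence factors through an exact functor $\scT_0 \simeq D^b(\gr R_{\ge 0})/\scP_{\ge 0} \to D^b_\sing(R)$. It is essentially surjective because, by Buchweitz \cite{Buchweitz_MCM}, every object of $D^b_\sing(R)$ is a shift of an MCM module $M$, and $M \cong M(j \deg W)[-2j]$ in $D^b_\sing(R)$ with $M(j \deg W) \in \gr R_{\ge 0}$ once $j \gg 0$; and it is fully faithful because morphisms in both quotients are computed by the stable $\Hom$ groups of MCM modules, which — by total reflexivity $\Ext^{>0}_R(M,R) = 0$ over the Gorenstein ring $R$ together with a degree-support argument forcing every map through a projective to vanish — agree with the ordinary graded $\Ext$-groups. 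The step I expect to be the genuine obstacle is the twisting normalization in the previous paragraph, equivalently this essential-surjectivity argument: one must show that, after an internal twist, the Cohen--Macaulay part of the decomposition can always be placed simultaneously in the range of degrees keeping it in $\gr R_{\ge 0}$ and in the left orthogonal of the projectives. This is where the grading on $R$ and the positivity of $\deg W$ enter essentially; the remaining ingredients are formal triangulated homological algebra.
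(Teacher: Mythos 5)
First, a point of comparison: the paper offers no proof of this statement at all --- it is quoted verbatim from Orlov with the citation to \cite[Lemma 2.4]{Orlov_DCCSTCS} --- so there is no in-paper argument to measure you against, and I can only judge your reconstruction on its own terms. Your overall architecture is the right one (and is Orlov's): identify $\scP_{\ge 0}$ with the perfect complexes built from the projectives $R(i)$, $i \le 0$, of $\gr R_{\ge 0}$, set $\scT_0 = {}^{\perp}\scP_{\ge 0}$, produce decomposition triangles, and identify $\scT_0$ with $D^b_\sing(R)$ through the quotient functor. In particular your full-faithfulness argument is correct: a map between objects of $\scT_0$ factoring through $R(a)$ dies because the first arrow vanishes for $a \le 0$ by orthogonality and the second vanishes for $a > 0$ because the target is $\bN$-graded.

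The genuine gap is in the step you yourself flag as the obstacle, and the mechanism you propose for it goes in the wrong direction. Truncating the minimal projective resolution ``far enough'' makes orthogonality to $\scP_{\ge 0}$ \emph{worse}, not better: by the very periodicity you invoke, $\Omega^{2}N \cong N(-\deg W)$, so each further period twists the syzygy, and hence its dual $\Hom_R(-,R)$, \emph{down} by $\deg W$; thus $\Hom_{\gr R}(\Omega^k M, R(i))$ becomes nonzero for more and more twists $i \le 0$ as $k$ grows. Already for the graded hypersurface $\bC[x]/(x^2)$ and $M = R/\m(-1) \in \gr R_{\ge 0}$ one has $\Omega^k M = R/\m(-k-1)$ with $\Hom_{\gr R}(R/\m(-k-1), R(-k)) \ne 0$ and $-k \le 0$ for every $k \ge 0$, so \emph{no} forward syzygy lies in ${}^{\perp}\scP_{\ge 0}$; the object of $\scT_0$ representing $M$ is the cosyzygy $R/\m$, via the triangle $R/\m[-1] \to R/\m(-1) \to R$. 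That is the actual content of the lemma and the place where Gorensteinness is used: once a syzygy is maximal Cohen--Macaulay, one extends its complete resolution to the \emph{right} and passes to cosyzygies, which twist the module up by $\deg W$ every two steps, and one must check there is a window of cosyzygies lying simultaneously in ${}^{\perp}\scP_{\ge 0}$ and in $\gr R_{\ge 0}$ (this is where the $a$-invariant enters). You cannot simply ``replace $G$ by an internal twist'' inside the truncation triangle; the twist must be realized by actual exact sequences linking it to $M$ through objects of $\scP_{\ge 0}$. The same reversal shows up twice more as a symptom: the stupid truncation gives a triangle $P \to x \to G$ rather than $G \to x \to P$, and with the paper's convention $M(a)_\ell = M_{a+\ell}$ it is $M(-j\deg W)$, not $M(j\deg W)$, that lands in $\gr R_{\ge 0}$ for $j \gg 0$ in your essential-surjectivity paragraph.
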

Let $\m = (x, y)$ be the maximal ideal of the origin.
For a graded $R$-module $M$,
another graded $R$-module
obtained by shifting the degree of $M$ by $a \in \bZ$
will be denoted by $M(a)$;
$
 M(a)_\ell = M_{a+\ell}.
$

\begin{lemma}
The graded modules $R/(y)$, $R/(x^{n-1}+y)$ and
$R/\m(i)$ for $i = - n + 3, - n + 4, \dots, -1, 0$
belong to $\scT_0$.  
\end{lemma}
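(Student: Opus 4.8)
The plan is to check, for each of the three kinds of modules, that it lies in the full subcategory $\scT_0 \subseteq D^b(\gr R_{\ge 0})$. By Orlov's lemma $\scT_0$ is the summand of $D^b(\gr R_{\ge 0})$ complementary to $\scP_{\ge 0} = \langle R(i) \mid i \le 0\rangle$ in the weak semiorthogonal decomposition, so it is a section of the quotient functor onto $D^b_\sing(R)$; membership of a module $M$ in $\scT_0$ amounts to the vanishing of the $\scP_{\ge 0}$-component of $M$, a condition that one reads off from a free resolution of $M$ together with the internal degrees of its generators and syzygies. The whole verification is organised around the factorization
$$
 W = x^{n-1} y + y^2 = y \cdot (x^{n-1}+y),
$$
and around the Gorenstein parameter $n-2$ of $R$: one has $\omega_R \cong R(n-2)$, since $\deg W = 2(n-1)$ and $\deg x + \deg y = n$.

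The modules $R/(y)$ and $R/(x^{n-1}+y)$ are the simplest. The identity above presents $(y,\, x^{n-1}+y)$ as a rank one matrix factorization of $W$, so these two cyclic modules — each generated in degree $0$ — are maximal Cohen--Macaulay, with $2$-periodic minimal free resolutions
$$
 \cdots \to R(-2(n-1)) \xrightarrow{\ \cdot(x^{n-1}+y)\ } R(-(n-1)) \xrightarrow{\ \cdot y\ } R \to R/(y) \to 0
$$
and its mirror image with $y$ and $x^{n-1}+y$ interchanged. Every free summand that occurs is a twist $R(j)$ with $j \le 0$, and with this input Orlov's lemma yields $R/(y),\ R/(x^{n-1}+y) \in \scT_0$.

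For the modules $R/\m(i)$ I would first resolve $\bC = R/\m$ over $R$. Its first syzygy is $\m = (x,y)$, which is maximal Cohen--Macaulay because it contains the nonzerodivisor $x$; the generators $x$ (degree $1$) and $y$ (degree $n-1$) of $\m$ have syzygy module over $R$ generated by the Koszul relation $(y,-x)$ and by $(0,\, x^{n-1}+y)$, the latter reflecting $W = y(x^{n-1}+y)$, which leads to a $2$-periodic matrix factorization of $\m$ built from the matrix with rows $(y,0)$ and $(-x,\, x^{n-1}+y)$ together with its adjugate. Twisting the resulting resolution by $(i)$ and combining it with the sequence $0 \to \m(i) \to R(i) \to R/\m(i) \to 0$ (so that $R/\m(i) \cong \m(i)[1]$ in $D^b_\sing(R)$), the statement $R/\m(i) \in \scT_0$ becomes the assertion that the $\scP_{\ge 0}$-component of $R/\m(i)$ vanishes; the upper bound $i \le 0$ just ensures $R/\m(i)$ is an object of $\gr R_{\ge 0}$, while the lower bound $i \ge -n+3$ — giving a window of exactly $n-2$ integers, matching the Gorenstein parameter — is precisely what makes the degree condition of Orlov's lemma hold.

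The syzygy computations and the matrix-factorization bookkeeping are routine. The step I expect to be the genuine obstacle is pinning down the two ends of the window, namely that $i = -n+2$ already produces a non-trivial $\scP_{\ge 0}$-component while $i = 0$ does not; this comes down to tracking internal degrees through the (complete) resolution with the unequal weights $\deg x = 1$, $\deg y = n-1$ and the relation weight $2(n-1)$ kept straight, and it is where the argument has to be carried out with care.
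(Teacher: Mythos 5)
Your overall plan (explicit free resolutions coming from the matrix factorization $W = y\cdot(x^{n-1}+y)$, and a window of length $n-2$ tied to the Gorenstein parameter) points in the right direction, but the decisive step is missing, and the criterion you substitute for it is incorrect. Membership of $M$ in $\scT_0$ is the left-orthogonality condition $\RHom_{D^b \gr R}(M, R(\ell)) = 0$ for all $\ell \le 0$; one checks it by applying $\Hom(-,R(\ell))$ to the free resolution and computing the cohomology of the \emph{dual} complex, i.e.\ the graded modules $\Ext^k_R(M,R)$. It is not something one ``reads off from a free resolution of $M$ together with the internal degrees of its generators and syzygies.'' Indeed, your stated test --- every free summand occurring in the resolution is a twist $R(j)$ with $j\le 0$ --- is passed by $R$ itself, which lies in $\scP_{\ge 0}$ and not in $\scT_0$, and it is passed by $R/\m(i)$ for \emph{every} $i \le 0$, including $i = -n+2$, which the lemma deliberately excludes. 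So your criterion cannot produce the window $-n+3 \le i \le 0$; you acknowledge as much by deferring ``pinning down the two ends of the window'' as the genuine obstacle, but that is precisely the content of the lemma, so the proposal stops short of a proof.

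The paper's argument is exactly the dual computation you omit: from the $2$-periodic graded resolutions it obtains
$\RHom(R/(y),R)$ and $\RHom(R/(x^{n-1}+y),R)$ concentrated in internal degrees $\ge n-1$, and
$\RHom(R/\m,R) \cong R/\m(-n+2)[-1]$, concentrated in internal degree $n-2$. Hence the degree-zero part of $\RHom(R/\m(i),R(\ell))$ is non-zero only when $\ell = n-2+i$, which is $\ge 1$ precisely when $i \ge -n+3$; together with $\ell \ge n-1 > 0$ for the two cyclic MCM modules, this gives the required vanishing for all $\ell \le 0$ and proves the lemma. To repair your write-up you would need to carry out these $\Ext$ computations (for the MCM modules this amounts to identifying $\Hom(M,R)$ with the image of the transposed matrix factorization, with its grading; for $R/\m$ you also need $\Ext^1(R/\m,R)$), rather than inspecting the twists in the resolution of $M$ itself.
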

\begin{proof}
The projective resolutions
\begin{align*}
 \dots \to
 \begin{array}{c}
   R(-2n+1) \\ \oplus \\ R(-3n+3)
 \end{array}
  &\xto{
   \begin{pmatrix}
     -y & x^{n-2} y \\
     x & y
   \end{pmatrix}
     }
 \begin{array}{c}
   R(-n) \\ \oplus \\ R(-2n+2)
 \end{array}
 \xto{
   \begin{pmatrix}
     -y & x^{n-2} y \\
     x & y
   \end{pmatrix}
     }
 \begin{array}{c}
   R(-1) \\ \oplus \\ R(-n+1)
 \end{array} \\
 & \qquad \xto{
   \begin{pmatrix}
     x & y
   \end{pmatrix}
     }
  R \to R/\m \to 0,
\end{align*}
\begin{align*}
 \dots \to R(-2n+2)
  \xto{x^{n-1} + y} R(-n+1)
  \xto{y} R
  \to R/(y) \to 0,
\end{align*}
%and
\begin{align*}
 \dots \to R(-2n+2)
  \xto{y} R(-n+1)
  \xto{x^{n-1} + y} R
  \to R/(x^{n-1}+y) \to 0
\end{align*}
show that
\begin{align*}
 \RHom_{D^b \module R}(R/\m, R) &\cong R/\m(-n+2)[-1], \\
 \RHom_{D^b \module R}(R/(y), R) &\cong R/(x^{n-1}+y)(-n+1), \\
 \RHom_{D^b \module R}(R/(x^{n-1}+y), R) &\cong R/(y)(-n+1).
\end{align*}
It follows that
\begin{align*}
 \RHom_{D^b \gr R}(R/(y), R(\ell)) &= 0, \qquad \ell \le 0, \\
 \RHom_{D^b \gr R}(R/(x^{n-1}+y), R(\ell)) &= 0, \qquad \ell \le 0, \\
 \RHom_{D^b \gr R}(R/\m(i), R(\ell)) &= 0, \qquad \ell \le 0,
\end{align*}
for $i = - n + 3, - n + 4, \dots, -1, 0$,
and the lemma is proved.
\end{proof}

Now we have the following:
\begin{lemma}
The graded modules $R/(y)$, $R/(x^{n-1}+y)$ and
$R/\m(i)$ for $i = - n + 3, - n + 4, \dots, -1, 0$
generate $D^b_\sing(R)$ as a triangulated category.
\end{lemma}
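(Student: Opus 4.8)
My plan is to work inside $D^b_\sing(R)$ throughout and to show that the triangulated subcategory $\scG$ generated by the $n$ listed modules exhausts it. I would first reduce to a statement about twisted cyclic modules: every object of $D^b(\gr R)$ is assembled from its cohomology modules by iterated cones, and every finitely generated graded $R$-module carries a finite filtration whose subquotients have the form $(R/\frakp)(i)$ with $\frakp$ a homogeneous prime of $R$. Since $W = y(x^{n-1}+y)$, the homogeneous primes of $R$ are exactly $\m = (x,y)$, $(y)$ and $(x^{n-1}+y)$, so $D^b_\sing(R)$ is generated by $\{R/\m(i),\ R/(y)(i),\ R/(x^{n-1}+y)(i)\}_{i\in\bZ}$, and it suffices to place each of these twisted modules in $\scG$.

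Next I would exploit the periodicity of the projective resolutions from the previous lemma. As $R$ is a hypersurface these resolutions are eventually $2$-periodic, and computing the first syzygies (using $\mathrm{Ann}_R(y) = (x^{n-1}+y)$ and the displayed periodic matrix for $R/\m$) yields in $D^b_\sing(R)$ the isomorphisms
$$R/(y) \cong \bigl(R/(x^{n-1}+y)\bigr)(-n+1)[1], \qquad R/(x^{n-1}+y) \cong \bigl(R/(y)\bigr)(-n+1)[1],$$
$$R/\m \cong (R/\m)(-n+1)[1].$$
Iterating, any twist $(R/(y))(k)$ or $(R/(x^{n-1}+y))(k)$ becomes, up to a homological shift, one of $(R/(y))(-r)$ or $(R/(x^{n-1}+y))(-r)$ with $0 \le r \le n-2$ (each step by $-n+1$ exchanging the two cyclic modules), and any $(R/\m)(k)$ becomes, up to a shift, $(R/\m)(k')$ with $-n+2 \le k' \le 0$.

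It then remains to handle these finitely many twists. The exact sequences
$$0 \to (R/(y))(k-1) \xrightarrow{\ \cdot x\ } (R/(y))(k) \to R/\m(k) \to 0$$
(and the analogous ones with $R/(x^{n-1}+y)$) give triangles, so starting from $k = 0$ and inducting downward one gets $(R/(y))(-r) \in \langle R/(y),\ R/\m(0),\dots,R/\m(-r+1)\rangle$; for $0 \le r \le n-2$ this uses only $R/\m(i)$ with $-n+3 \le i \le 0$, all on the list, whence $(R/(y))(-r), (R/(x^{n-1}+y))(-r) \in \scG$, and with the previous paragraph every twist of $R/(y)$ and of $R/(x^{n-1}+y)$ lies in $\scG$. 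Of the twists $(R/\m)(k)$ with $-n+2 \le k \le 0$, those with $-n+3 \le k \le 0$ are on the list, so only $R/\m(-n+2)$ is left. For it I would use $\bC[x]/(x^{n-1}) = R/(y,x^{n-1})$: the sequence $0 \to (R/(y))(-n+1) \xrightarrow{\ \cdot x^{n-1}\ } R/(y) \to \bC[x]/(x^{n-1}) \to 0$, together with $(R/(y))(-n+1) \cong R/(x^{n-1}+y)[-1]$ from the periodicity, shows $\bC[x]/(x^{n-1}) \in \scG$; and the degree filtration of $\bC[x]/(x^{n-1})$ gives a triangle relating $R/\m(-n+2)$, $\bC[x]/(x^{n-1})$ and $\bC[x]/(x^{n-2})$, the last of which lies in $\langle R/\m(i) : -n+3 \le i \le 0\rangle$. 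Hence $R/\m(-n+2) \in \scG$ and $\scG = D^b_\sing(R)$.

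The delicate point is the bookkeeping in the last step: one must check that the downward induction on twists closes using only the modules $R/\m(i)$ with $-n+3 \le i \le 0$, so that the extremal twist $R/\m(-n+2)$ is genuinely redundant once $R/(y)$ and $R/(x^{n-1}+y)$ are available and the generating set has exactly the advertised cardinality $n$; this forces one to pin down the degree shifts in the periodicity isomorphisms precisely (they are all by $-n+1$), since an off-by-one would move the admissible range.
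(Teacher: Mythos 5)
Your proof is correct and takes essentially the same route as the paper's: the same three families of modules are propagated through the same kinds of short exact sequences, and your periodicity isomorphisms (e.g.\ $R/\m \cong (R/\m)(-n+1)[1]$ and $R/(y) \cong (R/(x^{n-1}+y))(-n+1)[1]$) are exactly what the paper extracts from the auxiliary sequences $0 \to R(-1) \xto{x} R \to R/(x,y^2) \to 0$ and $0 \to R/\m(-n+1) \to R/(x,y^2) \to R/\m \to 0$. The only substantive difference is that you spell out the preliminary reduction, via the prime filtration of finitely generated graded modules over $R = \bC[x,y]/(y(x^{n-1}+y))$, to showing that all twists of $R/\m$, $R/(y)$ and $R/(x^{n-1}+y)$ lie in the subcategory --- a step the paper leaves implicit (or delegates to the alternative argument via Orlov's semiorthogonal decomposition).
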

\begin{proof}
The exact sequence
$$
 0 \to R \to R/(y) \oplus R/(x^{n-1}+y) \to R / (x^{n-1}, y) \to 0
$$
shows that the module
$
 R / (x^{n-1}, y)
$
can be obtained from $R/(y)$ and $R/(x^{n-1}+y)$
by taking cones up to perfect complexes.
Then the exact sequences
$$
\begin{array}{ccccccccc}
 0 &\to& R/\m(-1) &\to& R/(x^2, y) &\to& R/\m &\to& 0, \\
 0 &\to& R/\m(-2) &\to& R/(x^3, y) &\to& R/(x^2, y) &\to& 0, \\
   &&&& \vdots &&&& \\
 0 &\to& R/\m(-n+2) &\to& R/(x^{n-1}, y) &\to& R/(x^{n-2}, y) &\to& 0, \\
\end{array}
$$
show that
$R/\m(-n+2)$ can be obtained from
$R/(y)$, $R/(x^{n-1}+y)$, $R/\m$, \dots, $R/\m(-n+2)$, and $R/\m(-n + 3)$
by taking cones up to perfect complexes.
Now note that
$$
 0 \to R/(y)(-1) \to R/(y) \to R/\m \to 0
$$
shows that $R/(y)(-1)$ can be obtained from $R/(y)$ and $R/\m$,
and $R/(x^{n-1}+y)(-1)$ can be obtained from $R/(x^{n-1}+y)$ and $R/\m$
in the same way.
Then by repeating the above argument with degree shifted,
one can obtain $R/\m(-n+1)$, $R/\m(-n)$ and so on.

Now consider the exact sequences
$$
 0 \to R(-1) \xto{x} R \to R/(x, y^2)
$$
and
$$
 0 \to R/\m(-n+1) \to R/(x, y^2) \to R/\m \to 0.
$$
These show that
$R/\m(a + n - 1)$ can be obtained from $R/\m(a)$
by taking cones up to perfect complexes.

Alternatively, one can use a result of Orlov
\cite[Theorem 2.5]{Orlov_DCCSTCS}
and the fact that $R$ is Gorenstein with the parameter $2 - n$,
which gives the weak semiorthogonal decomposition
$$
 \scT_0
  = \langle R/\m, R/\m(-1), \dots, R/\m(-n+3), \scD_{n-2} \rangle
$$
where $\scT_0$ is equivalent to $D^b_\sing(R)$ and 
$\scD_{n-2}$ is equivalent to $\qgr(R)$.
Although $R/(y)$ and $R/(x^{n-1}+y)$ do not belong to $\scD_{n-2}$,
suitable cones over $R/(y)$, $R/(x^{n-1}+y)$ and $M_i$ for $i = - n + 3, \dots, 0$
belongs to $\scD_{n-2}$,
and these cones generate $\scD_{n-2} \cong \qgr(R)$.
\end{proof}

By computing the $\Ext$-groups between these generators,
one can show the equivalence
$$
 D^b_\sing(R) \cong D^b \module \Gamma
$$
with the derived category of finite-dimensional representations
of the Dynkin quiver $\Gamma$ in \eqref{eq:Dynkin_quiver},
such that the graded modules
$R / (y)$, $R / (x^{n-1} + y)$ and $R / \frakm(i)$
for $i = 0, -1, \dots, -n+3$ goes to
the simple modules corresponding to the vertices
$v_1$, $v_2$ and $v_{-i+3}$.

\section{The stable derived category of a disconnected sum}
 \label{sc:dbsing_sum}

We discuss a graded analogue of a result of Dyckerhoff
\cite[Theorem 5.1]{Dyckerhoff_CGCMF} in this section.
Let $W$ be an invertible polynomial in $S = \bC[x_1, \dots, x_n]$
graded by $L$.
A {\em graded matrix factorization} is an infinite sequence
$$
 K^\bullet =
  \{ \cdots \to K^i \xto{k^i} K^{i+1} \xto{k^{i+1}} K^{i+2} \to \cdots \}
$$
of morphisms of
$L$-graded free $S$-modules
such that
$
 k^{i+1} \circ k^i = W
$
and
$K^\bullet[2] = K^\bullet(\vecc)$.
%The first condition means that the composition of two consecutive morphisms
%is equal to multiplication by $W$,
%and the second condition means that $K^\bullet$ is quasi-2-periodic,
%in the sense that shifting the sequence by two to the left
%is equal to changing the $M$-grading by $\vecc$.
A morphism between two graded matrix factorizations
$K^\bullet$ and $H^\bullet$ is a family of morphisms
$
 f^i : K^i \to H^i
$
of $L$-graded modules such that
$
 f^{i+2} = f^i(\vecc).
$
The composition and the differential
on morphisms are defined in just the same way
as unbounded complexes of $L$-graded modules.
The homotopy category
% $\HMF_S^L(W)$
of the differential graded category
$\mf_S^L(W)$ of finitely-generated graded matrix factorizations is equivalent
to the stable category of graded maximal Cohen-Macaulay modules
over $R = S / (W)$
by Eisenbud \cite{Eisenbud_HACI},
which in turn is equivalent to the bounded stable derived category
$D^b_\sing(R)$ by Buchweitz \cite{Buchweitz_MCM}.

Recall that an object $\scE$ in a triangulated category is
a {\em generator} if
$\Hom(\scE, \scF[i]) = 0$ for all $i \in \bZ$ implies $\scF \cong 0$.
Let $\Lbar \subset L$ be a representative
of the finite abelian group $L / \bZ \vecc$ and
$\frakm = (x_1, \dots, x_n)$ be the maximal ideal of $S$
corresponding to the origin.
Since $R$ has an isolated singularity at the origin,
a result of Schoutens
\cite{Schoutens_PDSL},
Murfet \cite[Proposition A.2]{Keller-Murfet-Van_den_Bergh},
Orlov \cite[Proposition 2.7]{Orlov_FC}, or
Dyckerhoff \cite[Corollary 4.3]{Dyckerhoff_CGCMF} shows that
the object
$$
 \scE = \bigoplus_{\ell \in \Lbar} R/\frakm(\ell)
$$
is a generator of $D^b_\sing(R)$.

The graded matrix factorization $K^\bullet$
corresponding to $R / \frakm$ is given by
\begin{align*}
 K^i &=
  \begin{cases}
   \displaystyle{\bigoplus_{j\,:\,\text{even}} \Omega_S^j}
    & \text{$i$ is even}, \\
   \displaystyle{\bigoplus_{j\,:\,\text{odd}} \Omega_S^j}
    & \text{$i$ is odd},
  \end{cases} \\
 k^i &= \iota_\eta + \gamma \wedge \cdot
\end{align*}
with a suitable grading, where
$$
 \eta = \sum_i x_i \partial_i
$$
is the Euler vector field and
$\gamma$ is a one-form such that
$$
 W = \gamma(\eta).
$$
Now assume that $W$ is a disconnected sum
of two invertible polynomials $W_1 \in S_1$ and $W_2 \in S_2$,
and $\Lbar$ is the image of the product
$\Lbar_1 \times \Lbar_2$
of representatives
$\Lbar_1 \subset L_1$ and $\Lbar_2 \subset L_2$
by the natural projection
$$
 L_1 \oplus L_2 \to L = L_1 \oplus L_2 / (\vecc_1 - \vecc_2).
$$
Then the matrix factorization $K^\bullet$ for $W$
is the tensor product of matrix factorizations
$K_1^\bullet$ and $K_2^\bullet$ for $W_1$ and $W_2$;
\begin{align*}
 K^i &=
  \begin{cases}
   \Omega_{S_1}^\even \otimes \Omega_{S_2}^\even
    \oplus \Omega_{S_1}^\odd \otimes \Omega_{S_2}^\odd
    & \text{$i$ is even}, \\
   \Omega_{S_1}^\even \otimes \Omega_{S_2}^\odd
    \oplus \Omega_{S_1}^\odd \otimes \Omega_{S_2}^\even
    & \text{$i$ is odd},
  \end{cases} \\
 k^i &= (\iota_{\eta_1} + \gamma_1 \wedge \cdot)
         + (\iota_{\eta_2} + \gamma_2 \wedge \cdot).
\end{align*}
It follows that one has a quasi-isomorphism
of differential graded algebras
$$
 \hom_{\module S}(K^\bullet, K^\bullet)
  \cong \hom_{\module S_1}(K_1^\bullet, K_1^\bullet)
   \otimes \hom_{\module S_2}(K_2^\bullet, K_2^\bullet),
$$
which induces a quasi-isomorphism
$$
 \End_{\mf_S^L(W)}
  \lb \scE \rb
 \cong
   \End_{\mf_{S_1}^{L_1}(W_1)}
    \lb \scE_1 \rb
  \otimes
   \End_{\mf_{S_2}^{L_2}(W_2)}
    \lb \scE_2 \rb
$$
by passing to $L$-graded pieces.
Since $\scE$ is a compact generator of $\mf_S^L(W)$,
this shows that the tensor product
$
 \mf_{S_1}^{L_1}(W_1) \otimes \mf_{S_2}^{L_2}(W_2)
$
of differential graded categories
is quasi-equivalent to $\mf_S^L(W)$
up to direct summands.

\section{Group actions and crepant resolutions}
 \label{sc:group_actions}

Let $W$ be an invertible polynomial
associated with an $n \times n$ matrix $A = (a_{ij})_{ij}$.
The abelian group $L$ is the group of characters of $K$
defined by
$$
 K =
 \{ (\alpha_1, \dots, \alpha_n) \in (\bCx)^n
       \mid \alpha_1^{a_{11}} \cdots \alpha_n^{a_{1n}}
              = \dots = \alpha_1^{a_{n1}} \cdots \alpha_n^{a_{nn}}
 \}.
$$
The group $\Gmax$ of {\em maximal diagonal symmetries}
is defined as the kernel of the map
$$
\begin{array}{ccc}
 K & \to & \bCx \\
 \vin & & \vin \\
 (\alpha_1, \dots, \alpha_n) & \mapsto &
 \alpha_1^{a_{11}} \cdots \alpha_n^{a_{1n}},
\end{array}
$$
so that
there is an exact sequence
$$
 1 \to \Gmax \to K \to \bCx \to 1.
$$
This exact sequence induces an exact sequence
$$
 1 \to \bZ \to L \to \Gmax^\vee \to 1
$$
of the corresponding character groups,
where
$$
 \Gmax^\vee = \Hom(\Gmax, \bCx)
$$
is non-canonically isomorphic to $\Gmax$.
If we write
$$
 A^{-1} =
 \begin{pmatrix}
    \varphi_1^{(1)} & \varphi_1^{(2)} & \cdots & \varphi_1^{(n)}\\
    \varphi_2^{(1)} & \varphi_2^{(2)} & \cdots & \varphi_2^{(n)}\\
    \vdots & \vdots & \ddots & \vdots \\
    \varphi_n^{(1)} & \varphi_n^{(2)} & \cdots & \varphi_n^{(n)}\\
 \end{pmatrix},
$$
then the group $\Gmax$ is generated by
$$
 \rho_k
  = \lb \exp \lb 2 \pi \sqrt{-1} \varphi_1^{(k)} \rb, \dots, 
      \exp \lb 2 \pi \sqrt{-1} \varphi_n^{(k)} \rb \rb,
  \qquad k = 1, \dots, n.
$$
Put
$$
 \varphi_i = \varphi_i^{(1)} + \cdots + \varphi_i^{(n)},
  \qquad i = 1, \dots, n
$$
and define a homomorphism
$$
 \varphi : \bCx \to K
$$
by
$$
 \varphi(\alpha) = 
  \lb
   \alpha^{\ell \varphi_1}, \dots, \alpha^{\ell \varphi_n}
  \rb,
$$
where $\ell$ is the smallest integer
such that $\ell \varphi_i \in \bZ$ for $i = 1, \dots, n$.
Then $\varphi$ is injective and one has an exact sequence
$$
 1 \to \bCx \xto{\varphi} K \to \Gmaxbar \to 1,
$$
where $\Gmaxbar := \coker \varphi$.
$W$ is quasi-homogeneous of degree $\ell$
with respect to the $\bZ$-grading
$$
 \deg x_i = \ell \varphi_i, \qquad i = 1, \dots, n.
$$
The intersection
$
 \Image \varphi \cap G
$
is generated by
$$
 J =
 \lb \exp \lb 2 \pi \sqrt{-1} \varphi_1 \rb, \dots, 
      \exp \lb 2 \pi \sqrt{-1} \varphi_n \rb \rb.
$$
Let $G$ be a subgroup of $\Gmax$ containing $J$ and
$\Gbar = G / \la J \ra$ be its image in $\Gmaxbar$.
The inverse image of $\Gbar$ by $K \to \Gmaxbar$ will be denoted by $H$,
and we write the group of characters of $H$ as $M$.
Let $A^\ast$ be the transpose matrix of $A$.
The group $G_{\mathrm{max}}^\ast$ of maximal diagonal symmetries of $W^\ast$
is generated by the column vectors $\rhobar_i$
of $(A^\ast)^{-1} = (A^{-1})^\ast$.
The {\em transpose} $G^\ast$ of the subgroup $G \subset \Gmax$
is defined
by Krawitz \cite{Krawitz} as
$$
 G^\ast = \lc \prod_{i=1}^n \rhobar_i^{r_i} \left|
   \begin{pmatrix}
     r_1 & \cdots & r_n
   \end{pmatrix}
    A^{-1}
   \begin{pmatrix}
     a_1 \\ \vdots \\ a_n
   \end{pmatrix}
    \in \bZ \text{ for all }
   \prod_{i=1}^n \rho_i^{a_i} \in G
 \right. \rc .
$$
The {\em transposition mirror symmetry} of
Berglund and H\"{u}bsch \cite{Berglund-Hubsch}
states that the pairs $(W, G)$ and $(W^\ast, G^\ast)$
are mirror dual to each other.
Homological mirror symmetry is expected to take the form
$$
 D^b_\sing(R) \cong D^b \Fuk^{G^\ast} W^\ast
$$
where $R = \bC[x_1, \dots, x_n] / (W)$ is an $M$-graded ring,
although the ``orbifold Fukaya category'' $\Fuk^{G^\ast} W^\ast$
on the right hand side is not defined yet.

If $W^\ast$ is a polynomial in two variables and
$
 G^\ast \subset \SL_2(\bC),
$
then the map
$$
 W^\ast : \bC^2 \to \bC
$$
descends to the map
$$
 \Wbar^\ast : \bC^2 / G^\ast \to \bC,
$$
which can be pulled-back to the crepant resolution
$
 \pi : Y \to \bC^2 / G^\ast;
$
$$
 \Wtilde^\ast = \Wbar^\ast \circ \pi : Y \to \bC.
$$
One can replace $\Fuk^{G^\ast} W^\ast$
with the Fukaya category $\Fuk \Wtilde^\ast$
of a perturbation of $\Wtilde^\ast$ and
formulate the following conjecture:

\begin{conjecture} \label{conj:transposition_cr}
If $n = 2$ and $G^\ast \subset \SL_2(\bC)$,
then one has an equivalence
$$
 D^b_\sing(R) \cong D^b \Fuk \Wtilde^\ast
$$
of triangulated categories.
\end{conjecture}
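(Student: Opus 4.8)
The plan is to reduce Conjecture~\ref{conj:transposition_cr} to the $ADE$ classification and to identify both sides with the derived category of representations of a Dynkin quiver. The hypothesis $G^\ast \subset \SL_2(\bC)$ forces $\bC^2 / G^\ast$ to be a Kleinian (du Val) singularity, so that $Y \to \bC^2/G^\ast$ is its minimal resolution and the exceptional locus is a tree of $(-2)$-curves forming a Dynkin diagram of type $A$, $D$ or $E$; let $Q$ be the quiver obtained by orienting this diagram as in~\eqref{eq:Dynkin_quiver}. Since $W^\ast$ has an isolated critical point at the origin, $\Wbar^\ast$ has no critical point on the smooth locus of $\bC^2/G^\ast$, so after a small perturbation all critical points of $\Wtilde^\ast$ cluster near $\pi^{-1}(0)$ and map close to $0 \in \bC$. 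I would show that $D^b \Fuk \Wtilde^\ast \cong D^b \module \bC Q \cong D^b_\sing(R)$ and then match the two equivalences by pairing vanishing cycles with the explicit graded modules on the $B$-side, just as in Theorem~\ref{th:main}.

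For the $A$-side I would first enumerate the finitely many families of pairs $(W^\ast, G^\ast)$ with $n = 2$ and $G^\ast \subset \SL_2(\bC)$, and in each case write $\Wtilde^\ast$ down explicitly on the standard affine charts of the (toric, for types $A$ and $D$) resolution, where it becomes an honest polynomial. The Milnor fibre of $\Wtilde^\ast$ is then a smooth curve obtained by resolving and smoothing $\{\Wbar^\ast = 0\}$, on which one reads off a distinguished basis of vanishing cycles and their matching paths by a Picard--Lefschetz analysis running parallel to Sections~\ref{sc:fuk_dn}--\ref{sc:fuk_dn+g}. A more conceptual route is to invoke a two-dimensional homological McKay correspondence to identify $D^b\Fuk\Wtilde^\ast$ with an equivariant Fukaya--Seidel category $D^b\Fuk^{G^\ast}W^\ast$ of the orbifold Landau--Ginzburg model $[\bC^2/G^\ast, W^\ast]$, and then to recover the vanishing cycles from the $G^\ast$-equivariant refinement of the pictures of Section~\ref{sc:fuk_dn}; for types $A$ and $D$ this reduces to computations already carried out, using the suspension technique of Seidel \cite{Seidel_PL, Seidel_suspension}, while the finitely many $E$-type cases require a direct finite calculation. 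Either way the outcome is $D^b\Fuk\Wtilde^\ast \cong D^b\module\bC Q$.

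For the $B$-side, recall from Section~\ref{sc:group_actions} that the $M$-grading on $R = \bC[x_1,x_2]/(W)$ records the action of the group $H$, and from Section~\ref{sc:dbsing_sum} that $D^b_\sing(R)$ is the homotopy category of $M$-graded matrix factorizations of $W$ with compact generator $\scE = \bigoplus_\ell R/\frakm(\ell)$. I would compute the differential graded endomorphism algebra of $\scE$ in $\mf_S^M(W)$ case by case: for types $A$ and $D$ this is the $M$-graded refinement of the $L$-graded computation of Section~\ref{sc:dbsing} (the extra grading only records the $H$-action and leaves the underlying $\Ext$-quiver of type $ADE$, now matching the resolution graph on the nose), and for type $E$ one performs the analogous projective-resolution and $\Ext$ computation directly, using Orlov's semiorthogonal decomposition of $\scT_0$ and the Gorenstein parameter exactly as in Section~\ref{sc:dbsing}. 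This gives $D^b_\sing(R) \cong D^b\module\bC Q$, with $R/(y)$, $R/(x^{n-1}+y)$ and the shifted modules $R/\frakm(i)$ (or their $E$-type analogues) corresponding to the simple representations.

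The main obstacle is the $A$-side. The total space $Y$ is not an affine space, so the clean suspension picture of Section~\ref{sc:fuk_dn+g} is not directly available and one must set up Picard--Lefschetz theory on the resolution; dually, if one passes through the orbifold model, one has to give a rigorous definition of $\Fuk^{G^\ast}W^\ast$ and prove the equivariant McKay comparison $D^b\Fuk^{G^\ast}W^\ast \cong D^b\Fuk\Wtilde^\ast$ --- precisely the point that Conjecture~\ref{conj:transposition_cr} is designed to sidestep. A secondary difficulty is that the $E$-type cases are genuinely exceptional: they are not disconnected sums of type $A$ and $D$ pieces, so neither the induction of Section~\ref{sc:fuk_dn+g} nor the tensor-product argument of Section~\ref{sc:dbsing_sum} applies, and each must be handled by hand. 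Once both the $\Ext$-quiver and the vanishing-cycle quiver are identified with the same orientation of the Dynkin diagram, however, matching the two equivalences becomes a routine comparison of generators, as in the proof of Theorem~\ref{th:main}.
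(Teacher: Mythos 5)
The statement you are addressing is stated in the paper as a \emph{conjecture}, and the paper does not prove it: it only verifies the single example $W^\ast(u,v) = u^3 v + v^2$ with $G^\ast = \la \tfrac{1}{2}(1,1) \ra$, by writing $\Wtilde^\ast$ explicitly on the two charts of the minimal resolution of the $A_1$-singularity $xy=z^2$, observing that the only critical point is a $D_4$-singularity on the first chart, and comparing with a $B$-side computation for $x^3 + x y^2$ with $\deg x = \deg y = 1$ carried out as in Section~\ref{sc:dbsing}. Your text is likewise a plan rather than a proof, and you yourself concede that the essential steps --- Picard--Lefschetz theory on the non-affine total space $Y$, a rigorous definition of $\Fuk^{G^\ast} W^\ast$ together with an equivariant McKay comparison, and the exceptional cases not covered by the inductive machinery of Sections~\ref{sc:fuk_dn+g} and~\ref{sc:dbsing_sum} --- are all open. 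So there is no complete argument here, nor one in the paper to compare it against.

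Beyond that, there is a concrete error in your setup. You define the quiver $Q$ as an orientation of the dual graph of the exceptional locus of $Y \to \bC^2/G^\ast$ and then aim at $D^b \Fuk \Wtilde^\ast \cong D^b \module \bC Q \cong D^b_\sing(R)$. This is the wrong combinatorial object. First, $G^\ast$ is a subgroup of the group of maximal diagonal symmetries, hence a finite \emph{diagonal} subgroup of $\SL_2(\bC)$, hence cyclic; so $\bC^2/G^\ast$ is always an $A_k$-singularity and its resolution graph is always of type $A$ --- types $D$ and $E$ never occur there. Second, in the paper's own example the exceptional locus is a single $(-2)$-curve (type $A_1$), yet both categories are equivalent to $D^b \module \Gamma$ for $\Gamma$ the $D_4$-quiver of \eqref{eq:Dynkin_quiver}. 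The quiver governing both sides is determined by the critical locus of $\Wtilde^\ast$ (equivalently, on the $B$-side, by the pair consisting of $W$ and the grading group $M$), not by the McKay graph of $G^\ast$. Any correct approach must locate and analyse the critical points of $\Wtilde^\ast$ on the charts of $Y$ --- which is precisely what the paper's single worked example does --- rather than read the answer off the resolution graph.
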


As an example,
consider the case when
$$
 W^\ast(u, v) = u^3 v + v^2
$$
is the transpose of a polynomial of type $D_4$ and
$
 G^\ast = \la \frac{1}{2}(1,1) \ra \subset \SL_2(\bC)
$
is a cyclic group of order two
generated by $\diag(-1, -1)$.
The invariant ring $\bC^2[x, y]^{G^\ast}$ is generated by
\begin{align*}
 x &= u^2, \\
 y &= v^2, \\
 z &= u v
\end{align*}
with the relation
$$
 x y = z^2,
$$
and $\Wbar^\ast$ is given by
$$
 \Wbar^\ast = x z + y.
$$
The minimal resolution of
$$
 S = \bC^2 / G = \Spec \bC[x, y, z] / (x y - z^2)
$$
is obtained by a blow-up
along the ideal $(x, z) \subset \bC[x, y, z]$,
which is covered by a chart
\begin{align*}
 x &= x_1, \\
 y &= y, \\
 z &= x_1 z_1,
\end{align*}
with a local coordinate $(x_1, z_1)$,
and another chart
\begin{align*}
 x &= x_2 z_2, \\
 y &= y, \\
 z &= z_2,
\end{align*}
with a local coordinate $(x_2, y)$.
The map $\Wtilde^\ast$ is written as
$$
 \Wtilde^\ast = x_1^2 z_1 + x_1 z_1^2
$$
on the first chart
where it has a $D_4$-singularity at the origin,
and as
$$
 \Wtilde^\ast
% = x_2 z_2^2 + y
 = x_2^3 y^2 + y
$$
on the second chart,
where it does not have any critical point.

The transpose of $\lb W^\ast, \la \frac{1}{2}(1, 1) \ra \rb$ is given by
$
 \lb x^3 + x y^2, \la \frac{1}{3}(1, 1) \ra \rb,
$
where $\la \frac{1}{3}(1, 1) \ra \subset \GL_2(\bC)$ is
the cyclic group of order three generated by
$\diag(\exp(2 \pi \sqrt{-1} / 3), \exp(2 \pi \sqrt{-1} / 3))$.
The corresponding abelian group $M$ is isomorphic to $\bZ$,
and the resulting grading of $R = \bC[x, y] / (x^3 + x y^2)$
is given by $\deg x = \deg y = 1$.
One can show an equivalence
$$
 D^b_\sing(R) \cong D^b \module \Gamma
$$
with the derived category of a Dynkin quiver $\Gamma$ of type $D_4$
just as in Section \ref{sc:dbsing},
and Conjecture \ref{conj:transposition_cr} holds in this case.

{\em Acknowledgment}:
M.~F. is supported by Grant-in-Aid for Young Scientists (No.19.8083).
K.~U. is supported by Grant-in-Aid for Young Scientists (No.18840029).

\bibliographystyle{amsalpha}
\bibliography{bibs}

\def\cprime{$'$}
\providecommand{\bysame}{\leavevmode\hbox to3em{\hrulefill}\thinspace}
\providecommand{\MR}{\relax\ifhmode\unskip\space\fi MR }
% \MRhref is called by the amsart/book/proc definition of \MR.
\providecommand{\MRhref}[2]{%
  \href{http://www.ams.org/mathscinet-getitem?mr=#1}{#2}
}
\providecommand{\href}[2]{#2}
\begin{thebibliography}{KMVdB}

\bibitem[BH93]{Berglund-Hubsch}
Per Berglund and Tristan H{\"u}bsch, \emph{A generalized construction of mirror
  manifolds}, Nuclear Phys. B \textbf{393} (1993), no.~1-2, 377--391.
  \MR{MR1214325 (94k:14031)}

\bibitem[Buc87]{Buchweitz_MCM}
Ragnar-Olaf Buchweitz, \emph{Maximal {C}ohen-{M}acaulay modules and
  {T}ate-cohomology over {G}orenstein rings}, Available from {\texttt{
  https://tspace.library.utoronto.ca/handle/1807/16682}}, 1987.

\bibitem[Dyc]{Dyckerhoff_CGCMF}
Tobias Dyckerhoff, \emph{Compact generators in categories of matrix
  factorizations}, arXiv:0904.4713.

\bibitem[Eis80]{Eisenbud_HACI}
David Eisenbud, \emph{Homological algebra on a complete intersection, with an
  application to group representations}, Trans. Amer. Math. Soc. \textbf{260}
  (1980), no.~1, 35--64. \MR{MR570778 (82d:13013)}

\bibitem[ET]{Ebeling-Takahashi_SDWHP}
Wolfgang Ebeling and Atsushi Takahashi, \emph{Strange duality of weighted
  homogeneous polynomials}, arXiv:1003.1590.

\bibitem[FU]{Futaki-Ueda_Fermat}
Masahiro Futaki and Kazushi Ueda, \emph{Homological mirror symmetry for
  {B}rieskorn-{P}ham singularities}, arXiv:0912.0316.

\bibitem[KMVdB]{Keller-Murfet-Van_den_Bergh}
Berhard Keller, Daniel Murfet, and Michel Van~den Bergh, \emph{On two examples
  by {I}yama and {Y}oshino}, arXiv:0803.0720.

\bibitem[Kon95]{Kontsevich_HAMS}
Maxim Kontsevich, \emph{Homological algebra of mirror symmetry}, Proceedings of
  the International Congress of Mathematicians, Vol.\ 1, 2 (Z\"urich, 1994)
  (Basel), Birkh\"auser, 1995, pp.~120--139. \MR{MR1403918 (97f:32040)}

\bibitem[Kra]{Krawitz}
Marc Krawitz, \emph{{FJRW} rings and {L}andau-{G}inzburg mirror symmetry},
  arXiv:0906.0796.

\bibitem[Orl04]{Orlov_TCS}
D.~O. Orlov, \emph{Triangulated categories of singularities and {D}-branes in
  {L}andau-{G}inzburg models}, Tr. Mat. Inst. Steklova \textbf{246} (2004),
  no.~Algebr. Geom. Metody, Svyazi i Prilozh., 240--262. \MR{MR2101296}

\bibitem[Orl05]{Orlov_DCCSTCS}
\bysame, \emph{Derived categories of coherent sheaves and triangulated
  categories of singularities}, math.AG/0503632, 2005.

\bibitem[Orl09]{Orlov_FC}
Dmitri Orlov, \emph{Formal completions and idempotent completions of
  triangulated categories of singularities}, arXiv:0901.1859, 2009.

\bibitem[Sai98]{Saito_DRSW}
Kyoji Saito, \emph{Duality for regular systems of weights}, Asian J. Math.
  \textbf{2} (1998), no.~4, 983--1047, Mikio Sato: a great Japanese
  mathematician of the twentieth century. \MR{MR1734136 (2001d:14035)}

\bibitem[Sch03]{Schoutens_PDSL}
Hans Schoutens, \emph{Projective dimension and the singular locus}, Comm.
  Algebra \textbf{31} (2003), no.~1, 217--239. \MR{MR1969220 (2005e:13020)}

\bibitem[Sei]{Seidel_suspension}
Paul Seidel, \emph{Suspending {L}efschetz fibrations, with an application to
  local mirror symmetry}, arXiv:0907.2063.

\bibitem[Sei01]{Seidel_VC}
\bysame, \emph{Vanishing cycles and mutation}, European Congress of
  Mathematics, Vol. II (Barcelona, 2000), Progr. Math., vol. 202, Birkh\"auser,
  Basel, 2001, pp.~65--85. \MR{MR1905352 (2003i:53128)}

\bibitem[Sei08]{Seidel_PL}
\bysame, \emph{Fukaya categories and {P}icard-{L}efschetz theory}, Zurich
  Lectures in Advanced Mathematics, European Mathematical Society (EMS),
  Z\"urich, 2008. \MR{MR2441780}

\bibitem[Tak]{Takahashi_WPL}
Atsushi Takahashi, \emph{Weighted projective lines associated to regular
  systems of weights of dual type}, arXiv:0711.3907.

\end{thebibliography}

\noindent
Masahiro Futaki

Graduate School of Mathematical Sciences,
The University of Tokyo,
3-8-1 Komaba Meguro-ku Tokyo 153-8914, Japan

{\em e-mail address}\ : \  futaki@ms.u-tokyo.ac.jp

\ \\

\noindent
Kazushi Ueda

Department of Mathematics,
Graduate School of Science,
Osaka University,
Machikaneyama 1-1,
Toyonaka,
Osaka,
560-0043,
Japan.

{\em e-mail address}\ : \  kazushi@math.sci.osaka-u.ac.jp

\end{document}